\renewcommand{\hat}{\widehat}
\newcommand{\Z}{\mathbb{Z}}
\declaretheorem[style = plain, name=Theorem, within=section]{theorem}
\declaretheorem[style = plain, name=Proposition,sibling=theorem]{prop}
\declaretheorem[style = plain, name=Lemma,sibling=theorem]{lemma}
\declaretheorem[style = plain, name=Corollary,sibling=theorem]{cor}
\theoremstyle{definition}
\newtheorem{defn}[theorem]{Definition}
\newtheorem*{centerconjecture}{The Center Conjecture}
\theoremstyle{remark}
\title{Centers of Artin Groups defined on cones}
\author{Kasia Jankiewicz}
\author{MurphyKate Montee}
\address{Department of Mathematics, University of California, Santa Cruz}
\email{kasia@ucsc.edu}
\address{Department of Mathematics and Statistics, Carleton College}
\email{mmontee@carleton.edu}
\begin{document}

\begin{abstract}
    We prove that the Center Conjecture passes to the Artin groups whose defining graphs are cones, if the conjecture holds for the Artin group defined on the set of the cone points. In particular, it holds for every Artin group whose defining graph has exactly one cone point.
\end{abstract}

\maketitle

\section{Introduction}
An Artin group $A$ is given by the presentation 
$$A = \langle s_1, \dots, s_n| \underbrace{s_is_js_i\cdots}_{m_{ij} \hspace{.5mm} terms} = \underbrace{s_js_is_j\cdots}_{m_{ij} \hspace{.5mm} terms} \rangle$$
where $m_{ij} = m_{ji} \geq 2$. The data of an Artin group can be encoded by its \emph{defining graph} $\Gamma$ whose vertex set is $V(\Gamma)=\{s_1, \dots, s_n\}$, and each relation involving $s_i, s_j$ with terms of length $m_{ij}$ corresponds to an edge $(s_i, s_j)$ with label $m_{ij}$. The Artin group with defining graph $\Gamma$ will be denoted by $A_\Gamma$.
Every Artin group has a naturally associated Coxeter group quotient $A_\Gamma \to W_\Gamma$ obtained by adding relations $s_i^2 = 1$ for all $i=1,\dots, n$.

An Artin group $A$ is \emph{spherical} if the corresponding Coxeter group is finite, and otherwise $A$ is \emph{infinite type}.
A \emph{special subgroup} of $A$ is a subgroup generated by some subset of $K\subseteq V(\Gamma)$, denoted $A_K$. Each special subgroup $A_K$ is itself isomorphic to the Artin group with defining graph $\Gamma_K$, where $\Gamma_K$ is the subgraph of $\Gamma$ induced by $K$ \cite{vanderLek83}.

A \emph{$2$-labelled join} of labelled graphs $\Gamma_1, \Gamma_2$ is a graph join $\Gamma_1*\Gamma_2$ where every edge $(s_1,s_2)$, with $s_1\in V(\Gamma_1)$ and $s_2\in V(\Gamma_2)$, has label $2$, and the labels of edges contained in factors $\Gamma_1, \Gamma_2$ remain the same. We denote a 2-labelled join by $\Gamma_1 *_2 \Gamma_2$. 
If $\Gamma = \Gamma_1 *_2 \Gamma_2$, then $A_\Gamma = A_{\Gamma_1}\times A_{\Gamma_2}$.
An Artin group $A$ is \emph{irreducible} if its defining graph $\Gamma$ does not split as a $2$-labelled join, in which case we also call $\Gamma$ irreducible.

Each Artin group $A_\Gamma$ admits (a possibly trivial) \emph{decomposition into irreducible factors} $A_{\Gamma}= A_{\Gamma_1}\times \dots \times A_{\Gamma_n}$, where each $\Gamma_i\subseteq \Gamma$ is irreducible. Equivalently, this corresponds the maximal decomposition of $\Gamma$ as an $n$-fold $2$-labelled join of subgraphs $\Gamma_1, \dots, \Gamma_n$. 

Every irreducible spherical Artin group $A$ has an infinite cyclic center, and furthermore if $\{s_1, \dots, s_n\}$ is the standard generating set of $A$ then $Z(A)$ is generated by a power of $s_1s_2\cdots s_n$ \cite{Deligne72, BrieskornSaito72}.
If $z$ is a central element in a spherical factor $A_{\Gamma_i}$ of $A_\Gamma$, then $z\in Z(A_\Gamma)$.
Conjecturally, all the central elements of $A_\Gamma$ arise from irreducible factors that are spherical.

\begin{centerconjecture}
    Let $A_\Gamma$ be an Artin group with irreducible factor decomposition
    \[A_\Gamma = A_{\Gamma_1}\times \dots \times A_{\Gamma_n}.\]
    Then $Z(A_\Gamma) = \mathbb Z^k$ where $k$ is the number of spherical factors $A_{\Gamma_i}$.
\end{centerconjecture}

When $A = B\times C$ then $Z(A) = Z(B) \times Z(C)$ so, equivalently, the Center Conjecture states that every irreducible Artin group of infinite type has trivial center.

Apart from the spherical Artin groups, the Center Conjecture also holds for FC-type Artin groups, $2$-dimensional Artin groups \cite{GodelleParis12a}, and Euclidean Artin groups \cite{McCammondSulway}. More generally, every Artin group that satisfies the $K(\pi,1)$-conjecture also satisfies the Center Conjecture \cite{JankiewiczSchreve22}. 

A \emph{cone point} in a graph $\Gamma$ is a vertex that is adjacent to every other vertex of $\Gamma$. Charney and Morris-Wright have shown the Center Conjecture holds for Artin groups whose defining graphs are not cones \cite{CharneyMorrisWright19}. In other words, this is the case where the set of cone points of $\Gamma$ is empty. The proof of Charney and Morris-Wright relies on the geometry of the clique-cube complex, which is a CAT(0) cube complex associated with $\Gamma$. Their result can also be deduced from the following proposition, whose proof is brief and follows directly from the presentation of $A_\Gamma$.

\begin{restatable}{proposition}{conepointstheorem}\label{thm intro: center contained in cone point spherical}
    Let $A_\Gamma$ be an Artin group, where $T\subseteq V(\Gamma)$ is the set of cone points of $\Gamma$. Then $Z(A_{\Gamma})\subseteq Z(A_T)$.
\end{restatable}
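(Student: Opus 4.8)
The plan is to strip off the non-cone-points of $\Gamma$ one at a time using \emph{visual} amalgamated-product decompositions of $A_\Gamma$, together with the elementary fact that the center of a nontrivial amalgamated product lies in the edge group.

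Write $S=V(\Gamma)$ and let $R=S\setminus T$ be the set of vertices that are not cone points. If $R=\emptyset$ then $A_T=A_\Gamma$ and there is nothing to prove, so fix $u\in R$. By definition of a cone point there is a vertex $w\in S\setminus\{u\}$ with $\{u,w\}\notin E(\Gamma)$. Put $\Gamma_1=\Gamma_{S\setminus\{w\}}$, $\Gamma_2=\Gamma_{S\setminus\{u\}}$, and $\Gamma_0=\Gamma_{S\setminus\{u,w\}}$, which is the induced subgraph on $V(\Gamma_1)\cap V(\Gamma_2)$. Since $u$ and $w$ span no edge of $\Gamma$, every edge of $\Gamma$ avoids $u$ or avoids $w$, hence lies in $\Gamma_2$ or in $\Gamma_1$; thus $V(\Gamma_1)\cup V(\Gamma_2)=S$ and $E(\Gamma_1)\cup E(\Gamma_2)=E(\Gamma)$. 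Comparing generators and relators, this identifies $A_\Gamma$ with the amalgamated product
\[
A_\Gamma \;=\; A_{\Gamma_1} *_{A_{\Gamma_0}} A_{\Gamma_2},
\]
where the maps are the standard inclusions of special subgroups, which are injective by \cite{vanderLek83}. Because $\Gamma_0$ is a proper induced subgraph of both $\Gamma_1$ and $\Gamma_2$, the inclusions $A_{\Gamma_0}\subsetneq A_{\Gamma_1}$ and $A_{\Gamma_0}\subsetneq A_{\Gamma_2}$ are proper (e.g.\ the missing vertex maps to a generator of the Coxeter quotient $W_\Gamma$ not contained in the smaller standard parabolic subgroup), so the amalgam is nontrivial.

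Next I would invoke the classical fact that $Z(P*_C Q)\subseteq C$ for a nontrivial amalgam: a central element is cyclically reduced, a cyclically reduced element of syllable length at least $2$ fails to commute with a suitable element of $P\setminus C$ or $Q\setminus C$, and a single syllable lying in $P\setminus C$ commutes with nothing in $Q\setminus C$. Applying this to the decomposition above gives $Z(A_\Gamma)\subseteq A_{\Gamma_0}\subseteq A_{\Gamma_{S\setminus\{u\}}}$. Letting $u$ range over $R$ and using the intersection formula $\bigcap_i A_{K_i}=A_{\bigcap_i K_i}$ for special subgroups \cite{vanderLek83},
\[
Z(A_\Gamma)\;\subseteq\;\bigcap_{u\in R}A_{\Gamma_{S\setminus\{u\}}}\;=\;A_{\Gamma_{S\setminus R}}\;=\;A_T.
\]
Finally, since every $z\in Z(A_\Gamma)$ then lies in $A_T$ and commutes with all of $A_\Gamma\supseteq A_T$, we get $z\in Z(A_T)$, which is exactly the claim.

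I do not expect a genuine obstacle — the paper itself flags the proof as brief. The only point that needs real care is the amalgam decomposition and its nontriviality: one must check that non-adjacency of $u$ and $w$ makes the edge sets match so that $A_{\Gamma_1}*_{A_{\Gamma_0}}A_{\Gamma_2}$ is literally the presentation of $A_\Gamma$, and that $A_{\Gamma_0}$ is properly contained in both vertex groups. Everything else — injectivity and the intersection formula for special subgroups from \cite{vanderLek83}, and the statement that centers of nontrivial amalgams lie in the edge group — is standard.
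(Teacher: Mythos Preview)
Your proof is correct and follows essentially the same route as the paper: for each non-cone-point $u$ choose a non-adjacent vertex $w$, use the resulting visual amalgam $A_{\Gamma}=A_{\Gamma\setminus\{w\}}*_{A_{\Gamma\setminus\{u,w\}}}A_{\Gamma\setminus\{u\}}$ together with the fact that the center of a nontrivial amalgam lies in the edge group, and then intersect over all $u$ using van der Lek's formula $\bigcap_i A_{K_i}=A_{\bigcap_i K_i}$. The only cosmetic difference is that you spell out the nontriviality of the amalgam and the edge-set check, which the paper leaves implicit.
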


In particular, if in the above theorem $Z(A_{T}) = \{1\}$, then $Z(A_\Gamma) = \{1\}$, i.e.\ $A_{\Gamma}$ satisfies the Center Conjecture. 

Godelle and Paris showed that if all Artin groups whose defining graph is a clique satisfy the Center Conjecture, then all Artin groups satisfy the Center Conjecture \cite{GodelleParis12a}. Our second result gives alternative proofs of this fact and the Center Conjecture for FC-type Artin groups, as well as the Center Conjecture for many new Artin groups (see \Cref{fig:examples} for some examples).

\begin{restatable}{theorem}{maintheorem}\label{thm intro: center conjectures passes to cones}
    Let $A_\Gamma$ be an Artin group, where $T\subseteq V(\Gamma)$ is the set of cone points of $\Gamma$. If $A_{T}$ satisfies the Center Conjecture, then $A_\Gamma$ satisfies the Center Conjecture.
\end{restatable}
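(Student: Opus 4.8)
The plan is to use \Cref{thm intro: center contained in cone point spherical} to bound $Z(A_\Gamma)$ from above by $Z(A_T)$, and then, knowing $Z(A_T)$ explicitly from the hypothesis, to show that only the central elements coming from the spherical irreducible factors of $\Gamma$ can survive.

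First I would reduce to the case that $\Gamma$ is irreducible. If $\Gamma = \Gamma_1 *_2 \cdots *_2 \Gamma_n$ is the irreducible factor decomposition, a vertex of $\Gamma_j$ is a cone point of $\Gamma$ if and only if it is a cone point of $\Gamma_j$, so $T = T_1 \sqcup \cdots \sqcup T_n$ with $T_j$ the cone points of $\Gamma_j$, and $A_T = A_{T_1}\times\cdots\times A_{T_n}$. Since both the center and the number of spherical irreducible factors are additive across $2$-labelled joins, a direct check shows that if $A_T$ satisfies the Center Conjecture then so does each $A_{T_j}$, and that the Center Conjecture for $A_\Gamma$ follows once it is known for each $A_{\Gamma_j}$. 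So assume $\Gamma$ is irreducible. If $A_\Gamma$ is spherical, then every two standard generators satisfy a braid relation, so the defining graph is complete, every vertex is a cone point, $A_T = A_\Gamma$, and there is nothing to prove. Hence assume $\Gamma$ is irreducible of infinite type; the goal becomes $Z(A_\Gamma) = \{1\}$.

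By \Cref{thm intro: center contained in cone point spherical} and the hypothesis, $Z(A_\Gamma)\subseteq Z(A_T)=\mathbb Z^{k_T}$, where $\Delta_1,\dots,\Delta_{k_T}$ are the spherical irreducible factors of $A_T$ and $Z(A_T)$ is freely generated by their central elements $\eta_1,\dots,\eta_{k_T}$. Each $\Delta_i$ is a complete subgraph of $\Gamma$ consisting of cone points, and the $\Delta_i$ are pairwise $2$-joined. Suppose toward a contradiction that $z=\prod_i\eta_i^{a_i}$ is a nontrivial central element of $A_\Gamma$, and put $\Delta=\bigcup\{\Delta_i : a_i\ne 0\}$, so $z$ is a nontrivial central element of the spherical-type Artin group $A_\Delta=\prod_{a_i\ne 0}A_{\Delta_i}$. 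Since $\Gamma$ is irreducible of infinite type, $\Delta$ is a proper vertex subset that does not split off as a $2$-labelled join factor of $\Gamma$ (otherwise $A_\Delta$ would be a spherical direct factor of $A_\Gamma$). As every vertex outside $\Delta$ is adjacent to all of $\Delta$ (cone points), this failure is witnessed by a vertex $v$ joined to some vertex of $\Delta$ by an edge of label $\ge 3$; and $v\notin T$, for otherwise $v$ would lie in a different factor of $A_T$ than the $\Delta_i$ it braids with. Letting $J$ index the factors $\Delta_i$ with $a_i\ne 0$ that $v$ braids with, and $\Delta^+=\{v\}\cup\bigcup_{i\in J}\Delta_i$ (an irreducible complete subgraph), one uses that $v$ commutes with every $\Delta_i$ it does not braid with, together with $zv=vz$ and centrality of $z$ in $A_\Delta$, to conclude that $w:=\prod_{i\in J}\eta_i^{a_i}$ is a \emph{nontrivial central element of the irreducible Artin group $A_{\Delta^+}$ lying in the proper standard parabolic $A_{\Delta^+\setminus\{v\}}=A_{\bigcup_{i\in J}\Delta_i}$}.

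The theorem thus reduces to the statement that an irreducible Artin group has no nontrivial central element supported on a proper subset of its standard generators. When $A_{\Delta^+}$ is spherical this is classical Garside theory: $Z(A_{\Delta^+})$ is generated by a power of the Garside element, whose support is all of $\Delta^+$, while $A_{\Delta^+\setminus\{v\}}$ consists precisely of the elements supported away from $v$. The hard part will be the case where $A_{\Delta^+}$ has infinite type, since then $A_{\Delta^+}$ need not be spherical, FC-type, $2$-dimensional, or Euclidean, so the Center Conjecture is not available for it. I would attack this directly from the presentation: $w$ is a product of powers of the central Garside elements of the pairwise-commuting spherical blocks $\Delta_i$, $i\in J$, each joined to $v$ by at least one label-$\ge 3$ edge, and one must show that the single relation $wv=vw$ cannot hold inside $A_{\Delta^+}$; the extra leverage is that $v$ is not a cone point of $\Gamma$, which restricts the labels and shapes of the diagrams that can occur. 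Establishing this impossibility is, I expect, the main technical content of the proof; once it is in hand, the contradiction is immediate and the Center Conjecture for $A_\Gamma$ follows.
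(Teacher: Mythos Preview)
Your overall architecture matches the paper's: reduce to irreducible $\Gamma$ of infinite type, invoke \Cref{thm intro: center contained in cone point spherical} to land in $Z(A_T)$, write a putative nontrivial $z$ as a product of the Garside centers $\eta_i$ of the spherical factors of $A_T$, and find a vertex outside $T$ braiding with one of those factors. Your reduction to the statement ``an irreducible Artin group has no nontrivial central element supported on a proper subset of its generators'' is also the same target the paper aims at.

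The genuine gap is precisely the step you label ``the hard part.'' You propose to attack $wv=vw$ ``directly from the presentation'' of $A_{\Delta^+}$, but when $A_{\Delta^+}$ has infinite type this is not a plan: $\Delta^+$ is a clique, so $A_{\Delta^+}$ need not belong to any class for which even the word problem is known. The extra fact that $v\notin T$ gives no leverage inside $A_{\Delta^+}$, since that group does not see the rest of $\Gamma$. A second, related difficulty is that your $w=\prod_{i\in J}\eta_i^{a_i}$ has mixed signs, so you cannot appeal to positivity arguments.

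What you are missing is the retraction $\pi_X\colon A_\Gamma\to A_X$ of Charney--Paris, Blufstein--Paris and Godelle. It is not a group homomorphism, but it restricts to a homomorphism $P\!A_\Gamma\to P\!A_X$, sends positives to positives, and maps $A_Y$ into $A_{X\cap Y}$. The paper uses it twice. First, after passing to a power so that $z\in P\!A_\Gamma$ and arranging $k_1>0$, one retracts onto $A_{T_1\cup\{r\}}$ with $r\notin T$, $m_{rt}\ge 3$ for some $t\in T_1$; this annihilates every factor $z_i^{k_i}$ with $i\neq 1$ and leaves the \emph{positive} element $z_1^{kk_1}$, now central in $P\!A_{T_1\cup\{r\}}$ and supported on the proper subset $T_1$. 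Second, the same retraction idea, applied again to a rank-two parabolic $A_{\{s,t\}}$ with $m_{st}\ge 3$, proves the needed lemma (\Cref{thm:central elements in cones}): a nontrivial positive central element of $P\!A$ in an irreducible Artin group cannot lie in any proper standard parabolic. Positivity is what guarantees the image is nontrivial after each retraction; this is why the paper isolates a single factor with positive exponent rather than working with your mixed-sign $w$.
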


\begin{figure}
    \centering
    \includegraphics[width = \textwidth]{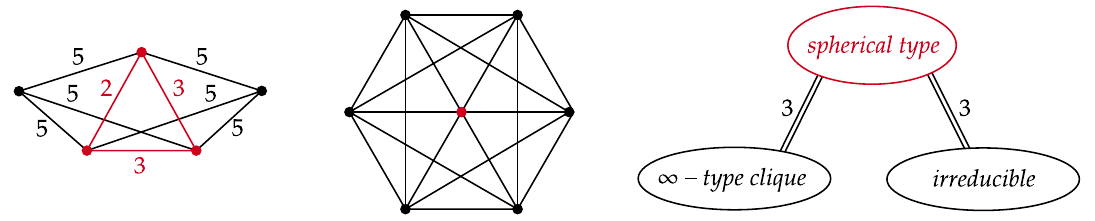}
    \caption{New examples of graphs whose associated Artin groups satisfy the Center Conjecture. The red subgraphs are spanned by the set of cone points. The middle graph can have any labels. The diagram on the right represents an infinite class of such graphs; the double edges represent joins, with all labels equal to 3.}
    \label{fig:examples}
\end{figure}

To prove \Cref{thm intro: center contained in cone point spherical} we consider splittings of non-clique Artin groups. This generalizes the result of \cite{CharneyMorrisWright19} using a different method.
Then to deduce \Cref{thm intro: center conjectures passes to cones} from \Cref{thm intro: center contained in cone point spherical} we use a retraction map (first used in \cite{CharneyParis14} and recently discussed more explicitly in \cite{BlufsteinParis23}) to prove some combinatorial restrictions on central elements of irreducible Artin groups; namely that any spelling of a central element which does not use every generator cannot be strictly positive or strictly negative.

\subsection*{Acknowledgements} The first author was partially supported by NSF grants DMS-2203307 and DMS-2238198.  The second author was partially supported by NSF grant number DMS-2317001.

\section{Central Elements are Generated by Cone Points}\label{sec:cone points}

Let $\Gamma$ a graph and let $x, y \in V(\Gamma)$ so that $m_{x,y} = \infty$.  A direct consequence of the presentation of $A_\Gamma$ is that it splits as the following amalgamated product:
    \[
        A_\Gamma = A_{\Gamma - \{x\}} *_{A_{\Gamma - \{x, y\}}} A_{\Gamma - \{y\}}.
    \]

The following lemma can be proven by considering normal forms in amalgamated free products, and was utilized in \cite{GodelleParis12a}.

\begin{lemma}\label{lem: center of free product}
    Let $G = H_1 *_K H_2$ be the free product of $H_1, H_2$ amalgamated over $K$. Then $Z(G) = Z(H_1) \cap Z(H_2) \subseteq K$. 
\end{lemma}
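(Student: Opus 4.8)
The plan is to use the structure theory of amalgamated free products, specifically normal forms and the characterization of when elements commute. Recall that every element $g \in G = H_1 *_K H_2$ either lies in a conjugate of $H_1$ or $H_2$, or is "cyclically reduced" of syllable length $\geq 2$, meaning it can be written $g = g_1 g_2 \cdots g_\ell$ with the $g_i$ alternating between $H_1 \setminus K$ and $H_2 \setminus K$, and with $g_1, g_\ell$ not in the same factor (after conjugation). The key facts I would invoke are: (i) if $g$ has syllable length $\geq 2$ and is cyclically reduced, then its centralizer is "small" — in particular $g$ cannot commute with any element of $H_1$ or $H_2$ that is not in $K$, and more precisely the centralizer of such $g$ consists of elements whose normal form is compatible with the cyclic word of $g$; and (ii) $K$ itself may fail to be central, but any element centralizing all of $G$ must in particular centralize generators from both factors.

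First I would take $z \in Z(G)$ and show $z \in K$. Suppose not; write $z$ in reduced form. If $z$ lies in a conjugate $hH_ih^{-1}$ but not in $K$, I can conjugate and assume $z \in H_i \setminus K$ (using that conjugating $z$ by $h$ keeps it central only if... actually $z$ central means $z$ is fixed by conjugation, so I should argue directly): since $z$ is central it commutes with every element of $H_j$ for $j \neq i$; but an element of $H_i \setminus K$ commuting with an element of $H_j \setminus K$ forces, by the normal form theorem, both elements into $K$ — contradiction, provided $H_j \setminus K \neq \emptyset$. If instead $z$ has syllable length $\geq 2$, then after cyclic reduction $z$ is conjugate to a cyclically reduced element $z'$ of syllable length $\geq 2$; but then $z' = z$ (central elements are conjugacy-class-invariant... more carefully: $z$ central implies $\{z\}$ is its own conjugacy class, so the cyclically reduced form has syllable length $\geq 2$ too), and such an element cannot commute with elements of $H_1 \setminus K$ — again a contradiction as long as both factors strictly contain $K$. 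This handles the main case; the degenerate cases where $H_i = K$ for some $i$ (so $G = H_j$) or where syllable-length arguments collapse should be checked separately but are immediate.

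Once $z \in K$, showing $z \in Z(H_1) \cap Z(H_2)$ is easy: $z$ commutes with all of $H_1$ and all of $H_2$ by centrality in $G$, and since $H_1, H_2$ generate $G$, and $K \subseteq H_1 \cap H_2$, we get $z \in Z(H_1) \cap Z(H_2)$. Conversely, if $z \in Z(H_1) \cap Z(H_2)$ then $z \in K$ automatically is not needed — rather $z$ commutes with both $H_1$ and $H_2$, hence with the group they generate, so $z \in Z(G)$; and such $z$ lies in $K$ because $Z(H_1) \cap Z(H_2) \subseteq H_1 \cap H_2 = K$ in the amalgam. This gives the reverse inclusion and the containment in $K$.

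The main obstacle I anticipate is handling the case analysis cleanly when $z$ has syllable length $\geq 2$: one must be careful that a central element is genuinely equal to its cyclically reduced form (not merely conjugate), and then invoke the precise description of centralizers of cyclically reduced elements in amalgamated products (e.g. from Magnus–Karrass–Solitar or Lyndon–Schupp). An alternative, perhaps cleaner route that avoids delicate centralizer computations: observe that if $z$ is central with syllable length $\geq 2$, it normalizes every vertex group conjugate, in particular it normalizes $H_1$; but the normalizer of $H_1$ in $G = H_1 *_K H_2$ equals $H_1$ itself whenever $K \subsetneq H_1$ (a standard fact for amalgams), forcing $z \in H_1$, contradicting syllable length $\geq 2$. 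This normalizer argument is likely the slickest; I would use it as the backbone and fall back on normal-form bookkeeping only for the fully degenerate cases.
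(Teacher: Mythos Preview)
Your proposal is correct and matches the paper's indicated approach: the paper does not write out a proof but simply remarks that the lemma ``can be proven by considering normal forms in amalgamated free products,'' which is exactly your syllable-length and normalizer-of-a-vertex-group argument. One caveat: the degenerate case $H_i = K$ is not ``immediate'' as you claim --- the lemma as stated actually fails there (take $K = H_1 = \{1\}$, $H_2 = \Z$, so $G=\Z$ but $Z(H_1)\cap Z(H_2)=\{1\}$) --- so an implicit non-triviality hypothesis $K\subsetneq H_i$ is needed, but the paper only ever applies the lemma to genuine splittings where both inclusions are strict.
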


Using these two results we can prove our first theorem.

\conepointstheorem*

\begin{proof}
    Let $R = V(\Gamma) - T$ and $r \in R$. There exists some $q_r \in V(\Gamma) - T$ so that $m_{r, q_r} = \infty$. Thus  
        \[
            A_\Gamma = A_{\Gamma-\{r\}} *_{A_{\Gamma - \{r, q_r\}}} A_{\Gamma - \{q_r\}}.
        \]
    By \Cref{lem: center of free product} this implies that for every $r \in R$ we have $Z(A_\Gamma) \subseteq {A_{\Gamma - \{r, q_r\}}} \subseteq A_{\Gamma - \{r\}}.$ In particular we have 
        \[
            Z(A_\Gamma) \subseteq \bigcap_{r \in R} A_{\Gamma - \{r\}}.
        \]
    
    In \cite{vanderLek83} van der Lek showed that for any $S, Q \subseteq V(\Gamma)$ we have $A_S \cap A_Q = A_{S\cap Q}$. In particular, $\bigcap_{r \in R} A_{\Gamma - \{r\}} = A_{\Gamma - R} = A_T$. Since $z \in Z(A_\Gamma)$, $z$ is also central in $A_T$.
\end{proof}

We obtain the following corollary immediately. 

\begin{cor}
    Suppose that $\Gamma$ is a graph with cones points $T$, and suppose $A_T$ has trivial center. Then $A_\Gamma$ has trivial center.
\end{cor}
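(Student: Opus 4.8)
The plan is to invoke \Cref{thm intro: center contained in cone point spherical} directly, since it has just been established and does all of the work. That proposition asserts that if $T \subseteq V(\Gamma)$ is the set of cone points of $\Gamma$, then $Z(A_\Gamma) \subseteq Z(A_T)$. By hypothesis $A_T$ has trivial center, i.e.\ $Z(A_T) = \{1\}$, so the inclusion immediately gives $Z(A_\Gamma) \subseteq \{1\}$. Since the identity is always central, $Z(A_\Gamma) = \{1\}$, as claimed.

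There is essentially no obstacle: the corollary is a formal consequence of the preceding proposition, and all of the substance has already been expended in proving \Cref{thm intro: center contained in cone point spherical} (which in turn rested on \Cref{lem: center of free product} on centers of amalgamated products, the splitting of $A_\Gamma$ coming from an $\infty$-labelled pair of vertices in $\Gamma - T$, and van der Lek's theorem $A_S \cap A_Q = A_{S \cap Q}$). The only point worth flagging is the degenerate case $T = \emptyset$: then $A_T$ is the trivial group, which has trivial center, and $\Gamma$ is not a cone, so the corollary specializes to the theorem of Charney and Morris-Wright \cite{CharneyMorrisWright19}.
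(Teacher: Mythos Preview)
Your proof is correct and matches the paper's approach exactly: the paper states this corollary as an immediate consequence of \Cref{thm intro: center contained in cone point spherical}, with no further argument given, and likewise observes afterward that the case $T = \emptyset$ recovers \cite[Thm 3.3]{CharneyMorrisWright19}.
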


In particular, if $T = \emptyset$ then $Z(A_\Gamma) = \{1\}$ and we recover \cite[Thm 3.3]{CharneyMorrisWright19}.  The condition that $Z(A_T) = \{1\}$ is not a necessary condition: if $T = \{t\}$ then $A_T \cong \Z$, so $A_T$ does not have trivial center. On the other hand, if there is any vertex of $\Gamma$ which does not commute with $t$ then $Z(A_\Gamma)$ is trivial.

\begin{cor}
    Suppose $\Gamma$ is a graph with a single cone point $t$. Then $A(\Gamma)$ satisfies the Center Conjecture, i.e.\ $Z(A_\Gamma) = \{1\}$ if and only if there exists $s \in S$ so that $m_{st} \neq 2$. . 
\end{cor}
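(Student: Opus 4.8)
The plan is to use \Cref{thm intro: center contained in cone point spherical} to confine $Z(A_\Gamma)$ to the cyclic subgroup $\langle t\rangle$, and then to determine which powers of $t$ remain central by passing to the rank-two special subgroup spanned by $t$ and a neighbour that does not commute with it. To begin, I would note that $A_{\{t\}}$ is the Artin group on a single vertex, hence $\cong\Z$, so \Cref{thm intro: center contained in cone point spherical} gives $Z(A_\Gamma)\subseteq Z(A_{\{t\}})=\langle t\rangle$ and shows $t$ has infinite order. This already disposes of the ``only if'' direction: if $m_{st}=2$ for every $s\in V(\Gamma)\setminus\{t\}$, then $t$ commutes with every standard generator, so $t\in Z(A_\Gamma)$ and $Z(A_\Gamma)=\langle t\rangle\cong\Z$ is nontrivial.

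For the ``if'' direction, I would fix $s$ with $m:=m_{st}\neq 2$; since $t$ is a cone point, $m<\infty$ and thus $m\geq 3$. Given $z\in Z(A_\Gamma)$, the first paragraph writes $z=t^k$, and as $z$ commutes with both $s$ and $t$ it lies in $Z(A_{\{s,t\}})$. By \cite{vanderLek83}, $A_{\{s,t\}}$ is the dihedral Artin group of type $I_2(m)$, which is irreducible of spherical type, so by \cite{Deligne72,BrieskornSaito72} its centre is infinite cyclic, generated by a power of $st$. The whole statement thus reduces to: \emph{for $m\geq 3$, $\langle t\rangle\cap Z(A_{\{s,t\}})=\{1\}$}.

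For even $m$ this is a one-line abelianization: $A_{\{s,t\}}^{\mathrm{ab}}\cong\Z^2$ with basis the classes of $s,t$, the class of $st$ is the diagonal, and $k[t]=j[st]$ forces $j=0$ and hence $k=0$. For odd $m$ the abelianization identifies $[s]$ with $[t]$ and is useless, so I would instead exploit that $A_{\{s,t\}}$ splits as the amalgamated free product $\langle st\rangle *_{\langle\Delta^2\rangle}\langle\Delta\rangle$ of two infinite cyclic groups over $Z(A_{\{s,t\}})=\langle\Delta^2\rangle$, where $\Delta=\underbrace{sts\cdots}_{m}$ is the Garside element. The centre $\langle\Delta^2\rangle$ is elliptic on the Bass--Serre tree, whereas $t=\Delta^{-1}(st)^{(m+1)/2}$ is a cyclically reduced word of syllable length two, hence hyperbolic --- as are all of its nonzero powers --- so no $t^k$ with $k\neq 0$ can be central. (Alternatively one can quote the known description of centralizers of standard generators in spherical Artin groups.) This gives $k=0$ and $Z(A_\Gamma)=\{1\}$. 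The hard part is exactly this odd-$m$ computation, where abelian invariants cannot separate $t$ from $\Delta$ and one must use the amalgam (or Garside) structure of the dihedral Artin group.

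Finally, I would confirm that $A_\Gamma$ indeed satisfies the Center Conjecture in both regimes. An irreducible spherical Artin group has a complete defining graph, so one with at least two vertices has all its vertices as cone points. Hence when some $m_{st}\neq 2$: the irreducible factor of $\Gamma$ containing $t$ has at least two vertices but exactly one cone point, and every other factor has at least two vertices (a one-vertex factor would be a second cone point of $\Gamma$) and no cone point --- so none is spherical, and the Conjecture correctly predicts $Z(A_\Gamma)=\{1\}$. When all $m_{st}=2$: the only spherical factor is $\{t\}$, and $A_\Gamma=\Z\times A_{\Gamma-t}$ with $Z(A_{\Gamma-t})=\{1\}$ by \Cref{thm intro: center contained in cone point spherical} (since $\Gamma-t$ has no cone point), so $Z(A_\Gamma)=\Z$ matches the prediction.
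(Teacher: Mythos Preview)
Your proof is correct and follows the same approach as the paper: confine $Z(A_\Gamma)$ to $\langle t\rangle$ via \Cref{thm intro: center contained in cone point spherical}, then use the known centre of the spherical dihedral Artin group $A_{\{s,t\}}$ to rule out nontrivial powers of $t$. You supply more detail than the paper at the key step---the paper simply asserts that $t^i$ central in $A_{\{s,t\}}$ forces $i=0$ from the description of the centre as powers of $ts$, whereas you justify this via abelianization (even $m$) and the Bass--Serre tree of the torus-knot amalgam (odd $m$)---and your final paragraph verifying that the Center Conjecture actually holds in both regimes is a useful addition that the paper's proof leaves implicit.
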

    \begin{proof}
        By \Cref{thm intro: center contained in cone point spherical} $z \in A_{\{t\}} = \langle t \rangle$. Let $z = t^i$. Since $A_{\{t, s\}}$ is spherical, the center of $A_{\{t, s\}}$ is generated by a power of $ts$. Hence $z$ is central in $A_{\{t, s\}}$ only if $i = 0$.
        
        If $m_{st} = 2$ for all $s \in S - \{t\}$, then $A_\Gamma \cong \langle t \rangle \times A_S$, so $\langle t \rangle \leq Z(A_\Gamma)$. 
    \end{proof}

In the following section we generalize this argument to larger sets of cone points.

\section{Cone Sets Satisfying the Center Conjecture}\label{sec: retraction}

\subsection{Retraction map}
The main goal of this subsection is \Cref{thm:central elements in cones}. Its proof relies on the retraction map $\pi_X :A \to A_X$ described in \cite{CharneyParis14, BlufsteinParis23}. The definition of the map $\pi_X$ relies on passing to the Coxeter group $W_\Gamma$. We re-establish notation here so that we can differentiate easily between elements in the Coxeter group and elements in the Artin group. 

Fix a graph $\Gamma$. Let $\Sigma = \{\sigma_v | v \in V(\Gamma)\}$ and $S = \{s_v | v \in V(\Gamma)\}$ be generating sets for the Artin and Coxeter groups $A_\Gamma$ and $ W_\Gamma$, respectively. Let $\theta: A_\Gamma \to W_\Gamma$ the natural surjection sending $\sigma_v \mapsto s_v$. The kernel of $\theta$ is the \emph{pure Artin group} $P\!A_\Gamma$. Let $X \subseteq V(\Gamma)$. We denote the subgroup of $W_\Gamma$ generated by $X$ by $W_X$, and similarly for the corresponding subgroup of $P\!A_\Gamma$ and subsets of $\Sigma, S$.

Let $X, Y$ be two subsets of $S$. An element $w \in W_\Gamma$ is \emph{$(X, Y)$-reduced} if it is of minimal length amongst the elements of the double coset $W_X w W_Y$. For any element $w \in W_\Gamma$ the \emph{length} of $w$, denoted $|w|$, is the shortest length of a word on $V(\Gamma)$ needed to express $w$. 

\begin{lemma}[Lemma 2.3 \cite{GodelleParis12}]\label{lemma: X-empty reduced}
    Let $X, Y \subseteq V(\Gamma)$ and let $w \in W_\Gamma$. There exists a unique $(X, Y)$-reduced element in $W_X w W_Y.$ Furthermore, the following are equivalent:
    \begin{itemize}
        \item an element $w \in W$ is $(X, \emptyset)$-reduced,
        \item $|sw|>|w|$ for all $s \in X$, and
        \item $|v\cdot w| = |v|+|w|$ for all $v\in W_X$.
    \end{itemize}
\end{lemma}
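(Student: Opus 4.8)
The plan is to deduce the statement from the Exchange Condition for the Coxeter system $(W_\Gamma, S)$, using throughout that the length of an element of a standard parabolic $W_X$ is the same computed in $W_X$ as in $W_\Gamma$, and that if $|ab| = |a| + |b|$ then the concatenation of reduced words for $a$ and $b$ is a reduced word for $ab$. The core fact to establish first is the one-sided statement, which is exactly the ``furthermore'' part of the lemma (take $Y = \emptyset$, so the double coset is $W_X w$): for $g \in W_\Gamma$ and $X \subseteq V(\Gamma)$, the coset $W_X g$ has a unique element $g_0$ of minimal length, and for $h \in W_X g$ the conditions ``$h = g_0$'', ``$|sh| > |h|$ for all $s \in X$'', and ``$|vh| = |v| + |h|$ for all $v \in W_X$'' are equivalent. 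I would prove by induction on $|v|$ that the minimal element $g_0$ satisfies $|v g_0| = |v| + |g_0|$ for all $v \in W_X$: writing $v = s v'$ with $s \in X$ a left descent, one has $|v' g_0| = |v'| + |g_0|$ by induction, and if $|v g_0| < |v| + |g_0|$ then $s$ is a left descent of $v' g_0$, so the Exchange Condition applied to the concatenated reduced word for $v' g_0$ deletes a letter lying either in the $v'$-block (forcing $|s v'| \le |v| - 2$, impossible since $s v' = v$) or inside a reduced word for $g_0$ (producing an element of $W_X g_0$ strictly shorter than $g_0$, impossible by minimality). The three equivalences and the uniqueness of $g_0$ then follow formally.

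For the full two-sided statement, existence is immediate: the set of lengths of elements of $W_X w W_Y$ is a nonempty subset of $\N$, hence attains a minimum at some $d$, which is $(X,Y)$-reduced; since $d$ is also shortest in $W_X d$ and in $d W_Y$, the one-sided result gives $|ud| = |u| + |d|$ and $|d u'| = |d| + |u'|$ for all $u \in W_X$, $u' \in W_Y$. For uniqueness, let $d, d'$ both be $(X,Y)$-reduced with $|d| = |d'| = \ell$, write $d' = u d v$ with $u \in W_X$, $v \in W_Y$, and choose such a representation with $|u| + |v|$ as small as possible. From $\ell = |d'| \ge |ud| - |v| = |u| + \ell - |v|$ and the symmetric inequality $\ell \ge |dv| - |u| = \ell + |v| - |u|$ one gets $|u| = |v| =: m$. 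Assume $m \ge 1$ and fix reduced words $u = s_1 \cdots s_m$, $v = t_1 \cdots t_m$. Considering the partial products $g_j = u d\, t_1 \cdots t_j$: since $|g_0| = |ud| = m + \ell$ and $|g_m| = |d'| = \ell$, the length drops by exactly one at each of the $m$ steps, so $t_1$ is a right descent of $g_0 = ud$, which admits the reduced word gotten by concatenating reduced words for $u$ and $d$. The Exchange Condition deletes one of its letters: if the deleted letter lies in the $d$-block then $|d t_1| < |d|$, contradicting the one-sided property of $d$; so it lies in the $u$-block, giving $u d\, t_1 = u_1 d$ with $u_1 \in W_X$ and $|u_1| = m - 1$, whence $d' = u_1 d\,(t_2 \cdots t_m)$ is a representation with strictly smaller parameter $|u_1| + |t_2 \cdots t_m| \le 2m - 2$, contradicting minimality. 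Therefore $m = 0$ and $d = d'$.

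I expect the two-sided uniqueness to be the main obstacle. The delicate point is to ensure the Exchange Condition is applied to a genuinely reduced word — which is why the identities $|ud| = |u| + |d|$ and $|dv| = |d| + |v|$ from the one-sided case are needed — so that the deleted letter can be localized unambiguously in the $W_X$-block or in the $d$-block, each case then being closed using the one-sided minimality of $d$; everything else is routine bookkeeping with lengths. We remark that this lemma is classical Coxeter combinatorics, which is why it is simply cited in the text from \cite{GodelleParis12}.
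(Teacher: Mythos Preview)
Your argument is correct. The one-sided analysis via induction on $|v|$ and the Exchange Condition is the standard route, and your localization of the deleted letter (either in the $v'$-block, forcing $|v| \le |v|-2$, or in the $g_0$-block, producing a shorter coset representative) is clean. The two-sided uniqueness argument is also fine: the length bookkeeping $|u|=|v|$ is right, the step-by-step descent of $|g_j|$ forces $t_1$ to be a right descent of $ud$, and your case split on the Exchange Condition correctly uses $|dt_1|=|d|+1$ (from the right-handed one-sided property of $d$) to rule out deletion in the $d$-block.

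As you yourself observe at the end, the paper does not prove this lemma at all: it is simply quoted from \cite{GodelleParis12} as background Coxeter combinatorics. So there is no proof in the paper to compare your approach against; you have supplied a complete and correct self-contained proof where the paper only gives a citation.
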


The map is defined as follows. 

\begin{defn}[\cite{BlufsteinParis23}]\label{defn:retraction}
Let $(\Sigma\cup \Sigma^{-1})^*$ denote the free monoid on $\Sigma\cup \Sigma^{-1}$. Let $X \subseteq V(\Gamma)$, and let $\hat{\alpha} = \sigma_{v_1}^{\varepsilon_1}\sigma_{v_2}^{\varepsilon_2} \cdots \sigma_{v_p}^{\varepsilon_p} \in (\Sigma\cup \Sigma^{-1})^*.$ 

Set $u_0 = 1 \in W_\Gamma$, and for $i \in \{1, \dots, p\}$ set $u_i = s_{v_1}s_{v_2}\cdots s_{v_i} \in W_\Gamma$. We can write each $u_i$ as $u_i = v_iw_i$ where $v_i \in W_X$ and $w_i$ is $(X, \emptyset)$-reduced. 

Now define    
    \[
    t_i = \begin{cases}
        w_{i-1}s_{v_i}w_{i-1}^{-1} &\mbox{ if } \varepsilon = 1, \\
        w_i s_{v_i}w_i^{-1} &\mbox{ if } \varepsilon = -1.
    \end{cases}
    \]
If $t_i \notin S_X$ we set $\tau_i = 1$. Otherwise, we have $t_i \in S_X$, and we set $\tau_i = \sigma_{x_i}^{\varepsilon_i}\in \Sigma\cup \Sigma^{-1}$.

Finally, we define
    \[
    \hat\pi_X(\hat{\alpha}) = \tau_1 \dots \tau_p \in (\Sigma\cup \Sigma^{-1})^*.
    \]
\end{defn}

We collect in the following proposition several properties of the map $\hat{\pi}_X$ and the induced map $\pi_X$ (see \Cref{prop: retraction properties}(\ref{subprop: map pi})) that will be of use in later results.

\begin{prop}\label{prop: retraction properties}
Let $X, Y \subseteq V(\Gamma)$, and $\alpha \in (\Sigma\cup\Sigma^{-1})^*$.
\begin{enumerate}
    \item \cite[Prop 2.3(1)]{BlufsteinParis23}\label{subprop: map pi} The map $\hat\pi_X: (\Sigma\cup \Sigma^{-1})^* \to (\Sigma_X\cup \Sigma^{-1}_X)^*$ induces a set-map $\pi_X: A_\Gamma \to A_X$.
    \item \cite[Prop 2.3(2)]{BlufsteinParis23}\label{subprop: id on X} For all $\alpha \in A_X$ we have $\pi_X(\alpha) = \alpha$.
    \item \cite[Prop 2.3(3)]{BlufsteinParis23}\label{subprop: hom on pure artin} The restriction of $\pi_X$ to $P\!A_\Gamma $ is a homomorphism $\pi_X:P\!A_\Gamma \to P\!A_X$.
    \item \cite[Prop 0.2(vi)]{Godelle23} \label{subprop: pi preserves positive}  Let $A_\Gamma ^+, A_X^+$ denote the Artin monoids on $V(\Gamma), X$, respectively. Then $\pi_X: A_\Gamma ^+ \to A_X^+$.
    \item \cite[Prop 0.2(ix)]{Godelle23} \label{subprop: restriction of retraction} If $\omega \in A_Y$ then $\pi_X(\omega) \in A_{Y\cap X}$.
    \item \cite[Prop 0.3(ii)]{Godelle23} \label{subprop: pi sends intial X to X} For any $\omega \in A_X$ we have $\pi_X(\omega\alpha) = \omega \pi_X(\alpha).$
\end{enumerate}
\end{prop}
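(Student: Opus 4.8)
Each clause of the proposition is attributed to \cite{BlufsteinParis23} or \cite{Godelle23}, so the real task is to see that all six statements drop out of \Cref{defn:retraction} by tracking the Coxeter‑prefix data $u_i = v_i w_i$; I would organize a self‑contained verification as follows. The load‑bearing statement is (1), that $\hat\pi_X$ is constant on the relations defining $A_\Gamma$. Applying either a free cancellation $\sigma_v^{\varepsilon}\sigma_v^{-\varepsilon}=1$ or a braid relation between two generators $\sigma_u,\sigma_v$ to a subword of $\hat\alpha$ leaves the prefix sequence $u_0,u_1,\dots$ unchanged outside the edited block and at its two endpoints, so it suffices to analyze the block itself; for a braid move this is a finite computation inside the rank‑$2$ parabolic generated by $\{s_u,s_v\}$, and one checks that the two words of $\tau_i$'s produced by the two sides of the move again differ by relations of $A_X$. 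Granting (1), parts (2) and (4) are immediate: if every letter of $\hat\alpha$ lies in $\Sigma_X\cup\Sigma_X^{-1}$ then every $u_i\in W_X$, so $w_i=1$, $t_i=s_{v_i}\in S_X$, and $\tau_i=\sigma_{v_i}^{\varepsilon_i}$, whence $\hat\pi_X(\hat\alpha)=\hat\alpha$; and if every $\varepsilon_i=1$ then each $\tau_i$ is either $1$ or a single positive generator, so the output word is positive.

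Parts (3) and (6) are the same ``seam analysis.'' For (3): if $\theta(\alpha)=1$, the Coxeter‑prefix word of a concatenation $\hat\alpha\hat\beta$ returns to $1\in W_\Gamma$ exactly at the junction, so the lists of $u_i$'s, hence of $\tau_i$'s, for $\hat\alpha\hat\beta$ are literally the concatenations of those for $\hat\alpha$ and for $\hat\beta$; thus $\hat\pi_X(\hat\alpha\hat\beta)=\hat\pi_X(\hat\alpha)\hat\pi_X(\hat\beta)$ word‑for‑word, and with (1) this is the homomorphism property on $P\!A_\Gamma$. For (6): pick a word $\hat\omega$ for $\omega\in A_X$ with letters in $\Sigma_X$; on the $\hat\omega$‑block we are in the situation of (2), and on the $\hat\alpha$‑block the point is that left‑multiplying a prefix of $\theta(\alpha)$ by $\theta(\omega)\in W_X$ changes only its $W_X$‑part $v_i$ and leaves its $(X,\emptyset)$‑reduced part $w_i$ untouched (using the characterization of $(X,\emptyset)$‑reducedness in \Cref{lemma: X-empty reduced}), so the $t_i$ and $\tau_i$ on the $\hat\alpha$‑block agree with those computed for $\hat\alpha$ alone; hence $\hat\pi_X(\hat\omega\hat\alpha)=\hat\omega\,\hat\pi_X(\hat\alpha)$.

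Part (5) is the one that uses a genuine Coxeter‑group input rather than pure bookkeeping, and I expect it to be the main obstacle. Writing $\omega\in A_Y$ as a word with letters in $\Sigma_Y\cup\Sigma_Y^{-1}$, every $u_i$ lies in $W_Y$, and I would use: (a) if $u\in W_Y$ and $u=vw$ with $v\in W_X$ and $w$ $(X,\emptyset)$‑reduced, then $v\in W_{X\cap Y}$ and $w\in W_Y$, which follows from the classical identity $W_X\cap W_Y=W_{X\cap Y}$ together with the length additivity in \Cref{lemma: X-empty reduced}; and (b) if a standard generator $s_x$ lies in $W_Y$ then $x\in Y$ (again a consequence of $W_{\{x\}}\cap W_Y=W_{\{x\}\cap Y}$). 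Since each $t_i$ has the form $w\,s_{v_i}\,w^{-1}$ with $w,s_{v_i}\in W_Y$ by (a), if $t_i\in S_X$ then by (b) in fact $t_i\in S_{X\cap Y}$, so $\tau_i\in\Sigma_{X\cap Y}\cup\Sigma_{X\cap Y}^{-1}$ and $\pi_X(\omega)\in A_{X\cap Y}$. None of these steps is deep once \Cref{defn:retraction} is on the table; the only real care needed is keeping the two‑parabolic bookkeeping of (5) straight.
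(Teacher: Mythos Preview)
The paper does not prove this proposition at all: each of the six items is simply attributed to \cite{BlufsteinParis23} or \cite{Godelle23} and then used as a black box in the subsequent arguments. Your sketch therefore goes beyond what the paper does, supplying the verification that the cited sources contain.

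Your arguments for (2)--(6) are correct. Two small points: in (5), claim (a) does not follow from ``$W_X\cap W_Y=W_{X\cap Y}$ together with length additivity'' alone, since $v$ is not a priori in $W_Y$; what you actually need is the Matsumoto/Tits fact that every reduced expression of an element of $W_Y$ uses only generators in $Y$, so that the length-additive reduced word for $u=vw$ forces the letters of the $v$-part into $X\cap Y$. In (6) you presumably mean letters in $\Sigma_X\cup\Sigma_X^{-1}$ rather than $\Sigma_X$. The only genuinely soft spot is (1): the braid-relation case is not just a ``finite computation inside the rank-$2$ parabolic,'' because the $(X,\emptyset)$-reduced parts $w_i$ along the two sides of a braid move need not lie in $W_{\{s_u,s_v\}}$; one must track how left multiplication by elements of that dihedral subgroup interacts with the decomposition $u=vw$ of the starting prefix, and then verify that the two resulting $\tau$-words differ by a relation of $A_X$. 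That analysis is exactly the content of the cited proofs and is the piece you would have to write out in full for a self-contained argument.
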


\begin{theorem}\label{thm:central elements in cones}
    Let $A_\Gamma$ be an irreducible Artin group and let $z$ be either a central element of $A_\Gamma$ of infinite order, or a non-trivial central element of $P\!A_\Gamma$, such that $z$ admits a positive spelling in $V(\Gamma)$. There does not exist any $K \subsetneq V(\Gamma)$ so that $z \in A_K$.
\end{theorem}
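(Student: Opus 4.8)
The plan is to argue by contradiction: suppose $z \in A_K$ for some $K \subsetneq V(\Gamma)$, and derive that $\Gamma$ splits as a $2$-labelled join, contradicting irreducibility. Fix a positive spelling $\hat z$ of $z$ in $V(\Gamma)$, and let $S \subseteq V(\Gamma)$ be the set of generators that actually appear in $\hat z$. Since $z$ has infinite order (or is a non-trivial pure element, hence certainly non-trivial), $S \neq \emptyset$; and the first thing to observe is that we may as well take $K$ as small as possible, so in particular we should be able to arrange $K \subseteq S$ — indeed, applying the retraction $\pi_K$ and using \Cref{prop: retraction properties}(\ref{subprop: id on X}) together with (\ref{subprop: pi preserves positive}) and (\ref{subprop: restriction of retraction}) shows $z = \pi_K(z)$ has a positive spelling supported on $K$, so we may assume $S = K$. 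The goal is then to show $S = V(\Gamma)$.

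The core step is to exploit centrality through the retraction map applied to conjugates. For any $v \in V(\Gamma) \setminus S$, consider $\pi_S(\sigma_v z \sigma_v^{-1})$. On one hand, since $z$ is central, $\sigma_v z \sigma_v^{-1} = z$, so this equals $\pi_S(z) = z$. On the other hand, I want to compute $\pi_S(\sigma_v z \sigma_v^{-1})$ directly from the monoid-level definition (\Cref{defn:retraction}): reading the word $\sigma_v \hat z \sigma_v^{-1}$, the leading letter $\sigma_v$ contributes a reflection $t_1 = s_v \notin S_S$ (since $v \notin S$), hence $\tau_1 = 1$; the question is what happens to the subsequent letters. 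The key combinatorial fact I would establish is that if $v$ does \emph{not} commute with every letter of $S$ appearing in $\hat z$, then the prefixes $u_i$ no longer lie in $W_S \cdot (\text{reduced})$ with trivial $W_S$-part in the way needed to recover all of $\hat z$, so $\pi_S(\sigma_v \hat z \sigma_v^{-1})$ is a word on $S$ that is strictly shorter than (or otherwise genuinely different from) $\hat z$. But $\pi_S$ is well-defined on $A_\Gamma$ and $A_\Gamma^+$ (\Cref{prop: retraction properties}(\ref{subprop: map pi}),(\ref{subprop: pi preserves positive})), and a positive word and a strictly shorter positive word cannot represent the same element of an Artin monoid (positive words for the same monoid element have the same length — this is the standard fact that Artin monoids are graded by length). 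This contradiction forces $v$ to commute with every generator in $S$, i.e. $m_{vs} = 2$ for all $s \in S$.

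Running this for every $v \in V(\Gamma) \setminus S$ shows that each such $v$ commutes with all of $S$. If $V(\Gamma) \setminus S$ were non-empty, then $\Gamma = \Gamma_S *_2 \Gamma_{V(\Gamma)\setminus S}$ would be a $2$-labelled join (every edge between the two parts has label $2$, and there are no other edges to worry about since the join is complete bipartite between the parts by the commuting relations), contradicting that $A_\Gamma$ is irreducible — unless $S = V(\Gamma)$, i.e. the "other factor" is empty. Either way we conclude $K = S = V(\Gamma)$, contradicting $K \subsetneq V(\Gamma)$.

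The main obstacle I anticipate is the precise combinatorial computation of $\pi_S(\sigma_v z \sigma_v^{-1})$ at the level of the defining word: I need to track the Coxeter prefixes $u_i = v_i w_i$ and the reflections $t_i = w_{i-1} s_{v_i} w_{i-1}^{-1}$ carefully to confirm that inserting $\sigma_v$ at the front (when $v \notin S$ and $v$ fails to commute with some $s \in S$ in the spelling) genuinely loses at least one letter from the image — the conjugating $\sigma_v^{-1}$ at the end must not "repair" this. I expect this to follow from \Cref{lemma: X-empty reduced} by analyzing how right-multiplication by $s_v$ interacts with the $(S,\emptyset)$-reduced decomposition, but getting the bookkeeping exactly right — and handling the possibility that $v$ commutes with some but not all of $S$ — is the delicate part. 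An alternative, possibly cleaner route is to avoid conjugation entirely: use centrality to write $\sigma_v z = z \sigma_v$, apply $\pi_{S \cup \{v\}}$ to both sides (noting $\pi_{S\cup\{v\}}(z) = z$ since $z \in A_S \subseteq A_{S \cup \{v\}}$), and use property (\ref{subprop: pi sends intial X to X}) to peel off the leading $\sigma_v$, reducing to the spherical two-generator analysis as in the single-cone-point corollary; but this still ultimately needs a statement that a positive central word "using" $v$ must use it "enough", so the length-grading argument above seems the most robust backbone.
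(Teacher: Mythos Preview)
Your main line of argument has a genuine gap at the step where you compare lengths. You want to feed the \emph{word} $\sigma_v\,\hat z\,\sigma_v^{-1}$ into $\hat\pi_S$ and conclude that the output is a positive word strictly shorter than $\hat z$, and then invoke the length grading on the Artin monoid. But $\sigma_v\,\hat z\,\sigma_v^{-1}$ is not a positive word: the trailing $\sigma_v^{-1}$ can (and typically does) produce a negative letter $\tau_p$ in $\hat\pi_S$. \Cref{prop: retraction properties}(\ref{subprop: pi preserves positive}) only tells you that positive \emph{elements} map to positive \emph{elements}; at the element level you already know $\pi_S(\sigma_v z\sigma_v^{-1})=\pi_S(z)=z$, which is no contradiction, while at the word level you have no positivity to exploit. (A quick check in type $A_2$ with $S=\{t\}$, $v=s$ shows $\hat\pi_{\{t\}}(\sigma_s\sigma_t\sigma_s^{-1})=\sigma_t^{-1}$, so negative output really occurs.) Your fallback route, retracting to $A_{S\cup\{v\}}$, is vacuous: both $z$ and $\sigma_v$ already lie in $A_{S\cup\{v\}}$, so $\pi_{S\cup\{v\}}$ is the identity on the equation $\sigma_v z=z\sigma_v$ and nothing new is learned.

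The paper avoids both problems by never leaving the positive monoid and by retracting to a \emph{two-generator} parabolic. Using irreducibility one picks $t\in K$ and $s\notin K$ with $m_{st}\neq 2$. Since $z$ is central, a cyclic permutation of the positive word $\hat\alpha$ beginning with $t$ still represents $z$ and is still positive. Applying $\pi_{\{s,t\}}$ to this positive word, \Cref{prop: retraction properties}(\ref{subprop: pi sends intial X to X}) and (\ref{subprop: pi preserves positive}) give a positive word beginning with $t$; by (\ref{subprop: restriction of retraction}) it lies in $A_{K\cap\{s,t\}}=A_{\{t\}}$, hence is a nontrivial positive power of $t$. After passing to a power so that $z\in P\!A_\Gamma$, (\ref{subprop: hom on pure artin}) transports centrality, so this power of $t$ is central in $P\!A_{\{s,t\}}$; but in a two-generator Artin group with $m_{st}\neq 2$ no nontrivial power of $t$ commutes with nontrivial powers of $s$, a contradiction. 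The key moves you are missing are (i) use cyclic permutation rather than conjugation so the input stays positive, and (ii) retract all the way down to $\{s,t\}$ so that the target has a completely understood center.
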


\begin{proof}
    Suppose $z$ is central in $A_\Gamma$ and has infinite order, and suppose that $z \in A_K$ for some $K \subsetneq V(\Gamma)$.  Suppose further that $z$ is a positive word on $K$; that is, there exists an $\hat\alpha \in K^*$ so that $z = \hat{\alpha}$. Since $z$ is central, $\theta(z)$ lies in the center of $W_\Gamma$. In particular, it has finite order \cite{Bourbaki}. By possibly passing to a finite positive power of $z$, we can assume that $z \in Z(P\!A_\Gamma)$.

    Without loss of generality, by possibly passing to a smaller subset of $K$ we can assume that all the letters of $K$ appear in $\hat\alpha$. We claim that there exists $t \in K$ and $s \in V(\Gamma) - K$ so that $m_{st} \neq 2$. Indeed, if there is no such $t \in K$, then $\Gamma = K *_2 (\Gamma - K)$, 
    which contradicts the assumption that $A_\Gamma$ is irreducible. Let $\hat\alpha'$ denote a cyclic permutation of $\hat\alpha$ beginning with $t$. Note that every cyclic permutation of $\hat\alpha$ is a conjugate of $\hat\alpha$, and therefore $\hat\alpha$ represents $z$, since $z$ is central.  Consider the map $\pi_{\{s, t\}}: A_\Gamma \to A_{\{s, t\}}$. By \Cref{prop: retraction properties}(\ref{subprop: pi sends intial X to X}) we know that $\hat\pi_{\{s, t\}}(\hat\alpha')$ starts with the letter $t$, and by \Cref{prop: retraction properties}(\ref{subprop: pi preserves positive}) we know that $\hat\pi_{\{s, t\}}(\hat\alpha')$ is a positive word. In particular, $\pi_{\{s, t\}}(z)$ is non-trivial. 
    By \Cref{prop: retraction properties}(\ref{subprop: restriction of retraction}) $\pi(z)\in A_{K\cap \{s, t\}} = A_{\{t\}}$, and therefore $\pi(z)$ is a positive power of $t$. 

    By \cref{prop: retraction properties}(\ref{subprop: hom on pure artin}) the map $\pi_{\{s, t\}}:P\!A_\Gamma \to P\!A_{\{s, t\}}$ is a surjective homomorphism, so $\pi_{\{s, t\}}(z)$ is central in $P\!A_{\{s, t\}}$. But since $A_{\{s, t\}}$ is spherical, its center is generated by a power of $st$. In particular, no non-trivial power of $t$ commutes with any non-trivial power of $s$ in $A_{\{s,t\}}$. Therefore $z = 1.$
\end{proof}

In the case that $\Gamma$ is not a clique, together with \Cref{thm intro: center contained in cone point spherical} this implies that no central element of $A_\Gamma$ admits a positive spelling.

\subsection{Proof of \Cref{thm intro: center conjectures passes to cones}}
We are now ready to prove the main theorem of this note.

\maintheorem*

    \begin{proof}
        If $\Gamma$ is a clique, then $V(\Gamma) = T$, so by assumption $A_\Gamma$ satisfies the Center Conjecture. Suppose instead that $\Gamma$ is not a clique. 
        
        It suffices to prove the theorem for irreducible $\Gamma$. Indeed, if $\Gamma = \Gamma_1*_2 \dots *_2\Gamma_n$ then $Z(A_\Gamma) = Z(A_{\Gamma_1}) \times \dots \times Z(A_{\Gamma_n})$ so the Center Conjecture holds for $\Gamma$ if and only if it holds for each irreducible factor $\Gamma_i$. The set of cone-points $T$ of $\Gamma$ is the union of the sets $T_i = T\cap V(\Gamma_i)$ of cone points in each $\Gamma_i$.  
        Furthermore, if $T$ splits as $T = {T'_1}*_2 \cdots *_2 {T'_n}$ then each $T_i$ is a union of sets $T'_{i_1}, \dots, T'_{i_j}.$
        Hence $A_{T_i}$ satisfies the Center Conjecture for each $i$ because $A_T$ satisfies the Center Conjecture. 

        Suppose that $\Gamma$ is irreducible. By \cref{thm intro: center contained in cone point spherical} $Z(A_\Gamma) \subseteq Z(A_T)$. 
        Let $A_T = A_{T_1}\times \cdots \times A_{T_m}\times \cdots \times A_{T_n}$ be the irreducible factor decomposition of $A_T$, where the factor $A_{T_i}$ is spherical if and only if $i \leq m$. 
        Since $A_T$ satisfies the Center Conjecture, $Z(A_T) = \langle z_1, \dots, z_m\rangle\simeq \mathbb Z^m$ where $z_i$ is a positive element of $A(T_i)$ generating $Z(A_{T_i})$ such that any spelling of $z_i$ uses every letter of $T_i$ at least once.   
        Thus $z = z_1^{k_1} \cdots z_m^{k_m}$ for some $k_i \in \mathbb{Z}$ for $i=1,\dots, m$. 
        Since $z$ is non-trivial, we know that at least one of the $k_i$ is non-zero. 
        Up to renumbering the factors and replacing $z$ with $z^{-1}$, we may assume that $k_1>0$. 
        There is some $k>0$ so that $z^k \in P\!A_\Gamma.$
        Let $x = z_1^{kk_1}$ and $y = z_{2}^{kk_{j+1}}\cdots z_m^{kk_m}$ so that $z^k = xy$. 
        Let $\hat\alpha_x, \hat\alpha_y$ denote minimal spellings of $x, y$, respectively. 
        Note that $\hat\alpha_x$ is positive. 

        There must exist some $r \in V(\Gamma)-T, t \in T_1$ so that $m_{rt}\neq 2$. Indeed, if this is not the case then $\Gamma = (\Gamma-T_1) *_2 {T_1}$, contradicting the irreducibility of $\Gamma$. 
        Since all cyclic permutations of $\hat\alpha_x$ represent $x$, we may take $t$ to be the initial letter of $\hat\alpha_x\hat\alpha_y$.
        Now consider the map $\pi_{T_1 \cup \{r\}}: A_\Gamma \to A_{T_1 \cup \{r\}}$. We have $\pi_{T_1 \cup \{r\}}(z^k) \in Z(P\!A_{T_1 \cup \{r\}}).$

        By \Cref{prop: retraction properties}(\ref{subprop: pi sends intial X to X}) $\pi_{T_1 \cup \{r\}}(xy) = x\pi_{T_1 \cup \{r\}}(y)$, since $x\in A_{T_1\cup \{r\}}$. Since $y \in A_{T - \{T_1\cup\{r\}\}}$, by \Cref{prop: retraction properties}(\ref{subprop: restriction of retraction}) we have $\pi_{T_1 \cup \{r\}}(y) = 1$.        
        So $\pi_{T_1 \cup \{r\}}(z^k) = x$.
        Thus $x$ is a positive central element of $P\!A_{T_1 \cup \{r\}}$ 
        which does not use every letter of $T_1 \cup \{r\}$. By \Cref{thm:central elements in cones} $x=1$, contradicting the choice of $k_1$.
        
    \end{proof}

\bibliographystyle{alpha}
\bibliography{artin_centers.bib}

\end{document}